\newtheorem{theorem}{Theorem}[section] 
\newtheorem{lemma}[theorem]{Lemma}
\theoremstyle{remark}
\theoremstyle{definition}
\newcommand{\q}{/\!/}
\newcommand{\nothing}[1]{\relax}
\newcommand{\Z}{\mathbb{Z}}
\newcommand{\C}{\mathbb{C}}
\newcommand{\fX}{\mathfrak{X}}
\def\SL{{\rm SL}}
\def\GL{{\rm GL}}
\def\fT{{\mathfrak T}}
\def\val{{\rm val}}
\def\Sym{{\rm Sym}}
\def\rec{{\rm rec}}
\def\SL{{\rm SL}}
\def\GL{{\rm GL}}
\def\Art{{\rm Art}}
\def\End{{\rm End}}
\def\Sp{{\rm Sp}}
\def\rec{{\rm rec}}
\def\Langlands{{\rm Langlands}}
\def\Irr{{\rm Irr}}
\def\WDRep{{\cG}}
\def\Gal{{\rm Gal}}
\def\Irr{{\rm Irr}}
\def\vol{{\rm vol}}
\def\cG{{\mathcal G}}
\def\Irr{{\mathbf {Irr}}}
\def\fX{{\mathfrak X}}
\def\fS{{\mathfrak S}}
\def\Sp{{\rm Sp}}
\def\q{{/\!/}}
\title[Epsilon factors]{Epsilon factors as algebraic characters on the smooth dual of $\GL_n$}
\author{Roger Plymen}
\address{School of Mathematics, Manchester University, Manchester M13 9PL, UK}
\email{roger.j.plymen@manchester.ac.uk}
\date{\today}
\begin{document}
\begin{abstract}
Let $K$ be a non-archimedean local field and let $G = \GL_n(K)$.   We have shown in previous work that the smooth dual $\Irr(G)$ admits a complex structure: 
in this article we show how the epsilon factors interface with this complex structure. 
The epsilon factors, up to a constant term, factor as invariant characters through the corresponding complex tori.   For the arithmetically unramified smooth dual of $\GL_n$, 
 we provide explicit formulas for the invariant characters.   
\end{abstract}

\maketitle

\section{Introduction}    Let $K$ be a non-archimedean local field and let $G = \GL_n(K)$.  In the spirit of the Langlands program, this short paper provides a link between number theory and geometry.   
The arithmetic comes from $\varepsilon$-factors, which are naturally associated to representations of Galois groups, or more generally to representations of the Weil-Deligne group $WD_K$ of $K$. 

 The smooth dual $\Irr(G)$ possesses a nice geometry: it is the disjoint union of smooth complex algebraic varieties, each of which is the quotient of a complex torus by a product of symmetric groups,
 see \cite{BP}.      
 
 In this article we show how the epsilon factors  interface with this complex structure.

 We continue with some background on the epsilon factors \cite{L}, \cite{D}. The epsilon factors are very important and central in the theory of Artin  $L$-functions. If $E$ denotes a global field, 
then we have the absolute Weil group $W_E$, see \cite[6.2.6]{MP}.  The completed $L$-function $L (s,V )$
 of a representation $W_E \to \GL(V )$ of the Weil group $W_E$ of the global field $E$ 
defines a meromorphic function in the complex plane satisfying the functional equation
\[
 L(s,V) = \varepsilon(s,V)L(1 - s,V^*)
 \]
where $V^*$ is the dual of the representation $V$ of $W_E$, and the epsilon factor $\varepsilon(s,V)$  
 is defined by the product
 \begin{align}\label{artin}
 \varepsilon(s,V) = \prod \varepsilon_{E_{\nu}}(s, V_{\nu}, \psi_{\nu}).
 \end{align}
Here, $\psi_{\nu}$ is the local component at a place $\nu$ of a non-trivial additive character $\psi$ of $\mathbb{A}_E /E$.  Then $\psi_{\nu}$ is a
non-trivial additive character of the local field $E_{\nu}$, see \cite[5.11]{D}.   As usual, $\mathbb{A}_E$ denotes the adeles of $E$.   

From now on,  we will focus on the non-archimedean places of the global field $E$, and we will write $K$ for the non-archimedean local field  $E_{\nu}$.  An elementary substitution, see \S2, allows on to replace the epsilon factor $\varepsilon_K(s,V,\psi)$ with three variables, by the epsilon factor  $\varepsilon_K(V,\psi)$ with two variables.   From now on, we will be concerned with the epsilon factor $\varepsilon_K(V,\psi)$.

From the point of view of the local Langlands correspondence for $\GL_n$, the relevant representations are the Weil-Deligne representations, see section 2.   
The set $\WDRep_n(K)$ of equivalence classes of $n$-dimensional Weil-Deligne representations can be organised as a disjoint union of complex algebraic
varieties:
\[
\WDRep_n(K) = \bigsqcup \fX
\]
Each variety $\fX$ arises in the following way.   Let $\rho'$ denote a Weil-Deligne representation of the Weil group $W_K$, and let $m$ denote the number of 
indecomposable summands in $\rho'$.  Then $\fX$ is the quotient of a complex torus $\fT$ of dimension $m$ by a certain finite group $\fS$:
\begin{align}\label{torus}
\fX = \fT/\fS.
\end{align}
 
 By a \emph{rational character}, or \emph{algebraic character}, or simply \emph{character}, we shall mean a morphism of algebraic groups
 \[
 \fT \to \C^\times.
 \]
 Such a character has the form 
 \[
 (z_1, \ldots, z_m) \mapsto z_1^{\beta_1} \cdots z_m^{\beta_m}
 \]
  where the $\beta_j$ are all integers. 
  
  \begin{theorem}\label{main}  Up to a constant $e(\fX,\psi)$, each epsilon factor $\varepsilon_K(V,\psi)$ factors through a rational character $\chi(\fX,\psi)$ of $\fT$.   Quite specifically, we have  
 \begin{align}\label{map}
 (z_1, \ldots, z_k) \mapsto (z_1^{\beta_1}, \ldots, z_k^{\beta_k})
 \end{align}
 where  the $z_j$ are torus coordinates, $\fX$ is the orbit of the Weil-Deligne representation
 \[
  V_1 \otimes \Sp(d_1) \oplus \cdots \oplus V_k \otimes \Sp(d_k)
 \]
 and
 \[
 \beta_j = (d_j - 1) \dim V^I_j  + d_j[a(V_j) + n(\psi) \dim(V_j)]
 \]
 where $V_1, \ldots, V_k$ are irreducible representations of the local Weil group $W_K$, 
 $a(V_j)$ denotes the Artin conductor exponent of $V_j$,  $n(\psi)$ denotes the conductor of $\psi$, and $I$ denotes the inertia subgroup of 
 $W_K$.  
  \end{theorem}  
  
   Since the conductors are integers, 
 the number $\beta_j$ is an integer.   So the map (\ref{map}) is a rational character of $\fT$.
 
 
 The rather lengthy formula for the constant $e(\fX,\psi)$ appears later in this article, see (\ref{e}) and (\ref{ee}).   
 
 We emphasize that the character $\chi(\fX,\psi)$ and the constant $e(\fX,\psi)$ \emph{depend only on the connected component} $\fX$, once a base point has been chosen in 
 $\fX$, and  the 
 additive character $\psi$ has been chosen and fixed.

In \S 4, we focus on a part of the smooth dual of $\GL_n(K)$, namely the arithmetically unramified smooth dual.   The extended quotient $T\q W$ is a model for this part of the dual,
where $T$ is a maximal torus in the Langlands dual group $\GL_n(\C)$ and $W$ is the Weyl group of $\GL_n(\C)$.  We calculate explicitly the epsilon factors.
  \medskip

 This article is an expanded account of a talk given at the conference \emph{Geometry, representation theory and the Baum-Connes conjecture}, Fields Institute, July 2016, which was 
 part of the EU Quantum Dynamics network activities.
  We thank  Paul Baum for several valuable conversations, which led to major changes in the exposition of this article.   
  
  We take this opportunity to thank Paul Baum for an inspiring collaboration, 
  which has withstood the test of time, and which has enabled us to discover and explore several new mathematical vistas.   Paul observed that the article  \cite{BP} 
   was the tip of a large iceberg.   Further exploration led to several articles, of which \cite{ABPS} is the most recent.

In writing this article, we were greatly influenced by the preprint of Ikeda \cite{Ikeda}. The main background reference is Deligne \cite{D}, but we prefer to use the notation in Langlands\cite{L}.  
We thank the referee for providing many  insightful comments which influenced the final version of this Note.     

\section{Weil-Deligne representations} We need to recall some material, following closely the exposition in \cite{BP}.   
Let $K$ be a non-archimedean local field.   The normalized valuation will be denoted $\val_K$, the norm of $x \in K$ will be denoted $|| x||_K$, 
and the cardinality of the residue field of $K$ will be denoted $q_K$.        We have the local Artin reciprocity map
\[
\Art_K : W_K \to K^\times. 
\] 
We will write
\[
||w|| = ||\Art_K(w)||_K = q_K^{- \val_K(\Art_K(w)}
\]
for all $w \in W_K$.   

The homomorphism $d : W_K \to \Z$ is defined by 
\[
d(w) = \val_K(\Art_K(w)).
\]
  The Weil group $W_K$ fits into a short exact sequence 
\[
0 \to I_K \to W_K \overset{d} \longrightarrow  \Z \to 0
\]
 where $I_K$ is the inertia group of $K$.     
A \emph{Weil-Deligne representation} is a pair $(\rho, N)$ consisting of a continuous representation $\rho : W_K \to \GL_n(V)$,
 $\dim_{\C}(V) = n$, together with a nilpotent endomorphism $N \in \End(V)$ such that 
 \[
 \rho(w)N \rho(w)^{-1} = ||w|| N.   
 \]  

For any $n \geq 1$, the representation $\Sp(n)$ is defined by 
\[
V = \C^n = \C e_0 + \cdots + \C e_{n-1}
\]
with $\rho(w) e_i = ||w||^i e_i$ and $Ne_i = e_{i + 1} \, (0 \leq i \leq n -1), \, Ne_{n-1} = 0$.   

Let $\WDRep_n(K)$ be the set of equivalence classes of semisimple $n$-dimensional Weil-Deligne representations.     
Let $\Irr(\GL_n(K))$ be the set of equivalence classes of irreducible smooth representations of $\GL_n(K)$.   

We recall the local Langlands correspondence
\[
\rec_K : \Irr(\GL_n(K)) \to \WDRep_n(K)
\]
which is unique subject to the conditions listed in \cite[p.2]{HT}.  See also the single trace condition of Scholze in \cite[Theorem 1.2(a)]{Sch}.

We identify the elements of the set $\WDRep_1(K)$, the quasicharacters of $W_K$, with quasicharacters of $K^\times$ via the local Artin reciprocity map
$\Art_K$.   The local Langlands correspondence is compatible with twisting by quasicharacters \cite[p.2]{HT}.

A quasicharacter $\eta : W_K \to \C^\times$ is (arithmetically) \emph{unramified} if $\eta$ is trivial on the inertia group $I_K$.   In that case we have 
$\eta(w) = z^{d(w)}$ with $z \in \C^\times$.    The group of unramified quasicharacters of $W_K$ is denoted $\Psi(W_K)$.   Let $\Phi = \Phi_K$ denote a geometric
Frobenius element in $W_K$.    The isomorphism $\Psi(W_K) \simeq \C^\times$ is secured by the map $\eta \mapsto \eta(\Phi_K)$.

Let now 
\[
\rho' = \rho_1 \otimes \Sp(d_1) \oplus \cdots \oplus \rho_m \otimes \Sp(d_m)
\]
 be a Weil-Deligne representation.  In this formula, $\rho_1, \ldots, \rho_m$ are irreducible representations of the Weil group $W_K$.    The set
\[
\{\eta_1 \rho_1 \otimes \Sp(d_1) \oplus \cdots \oplus \eta_m \rho_m \otimes \Sp(d_m) : \eta_1, \ldots , \eta_m \in \Psi(W_K) \}
\]
will be called the \emph{orbit} of $\rho'$ under the action of 
\[
\Psi(W_K) \times \cdots \times \Psi(W_K)
\]
($m$ factors).   This orbit will be denoted $\fX = \fX(\rho')$.   The orbits create a partition of $\WDRep_n(K)$.   
The set  $\WDRep_n(K)$ is a disjoint union of orbits:
\[
\WDRep_n(K) = \bigsqcup \fX
\]
We note that $\Psi(W_K)^m \simeq (\C^\times)^m$, a complex torus.  

Each irreducible representation $\rho$ of $W_K$ has a \emph{torsion number}: the order of the cyclic group 
of all those unramified characters $\eta$ of $W_K$ for which $\rho \otimes \eta \cong \rho$.   The torsion number of $\rho_i$ will be denoted $\tau_i$.    
To determine the structure of each orbit, we have to pay attention to the torsion numbers of $\rho_1, \ldots , \rho_m$ and to the action of $\GL_n(\C)$ by conjugation.

Let $\mu_{\tau} \subset \C$ denote the cyclic group of order $\tau$.
The orbit $\mathcal{O}(\rho')$ will have the structure of the \emph{quotient}  torus
\begin{align}\label{prod}
\Psi(W_K) \times \cdots \times \Psi(W_K)/ (\mu_{\tau_1}  \times \cdots \times \mu_{\tau_m})
\end{align}
modulo a product of symmetric groups.   We will write $z_1, \ldots, z_m$ for coordinates on this quotient torus. To be precise, we are writing $z_j$ as a coset:
\begin{align}\label{zzz}
z_j = \eta_j(\Phi_K)\cdot \mu_{\tau_j}
\end{align}

In this way, the set $\WDRep_n(K)$ acquires 
(locally) the structure of complex algebraic variety.   Each connected component in  $\WDRep_n(K)$ is the quotient of a 
complex torus by a product of symmetric groups.

We recall that, given an irreducible representation $V$ of $W_K$, there exists an irreducible representation $V^{\Gal}$ of Galois type such that $V = V^{\Gal} \otimes \omega_s$ for some
$s \in \C$, see \cite[(2.2.1)]{Tate}. 

This allows us to  view the orbit $\fX(\rho')$ 
as a pointed complex algebraic variety, with base point 
\[
\rho' = \rho_1 \otimes \Sp(d_1) \oplus \cdots \oplus \rho_m \otimes \Sp(d_m).
\]
where each  $\rho_j$ is of Galois type.

\section{The formulas}   The elementary substitution referred to in the Introduction is as follows.  Let
 \[
 \varepsilon_K(s,V, \psi) = \varepsilon_K(V \otimes \omega_{s - 1/2}, \psi)
 \]
  for all $s \in \C$, see \cite[(3.6.4)]{Tate}, \cite[p.6]{L}.  For $s \in \C$, $\omega_s : W_K \to \C^\times$ is the unramified quasicharacter defined by
  $\omega_s(w) = ||w||_K^s$ for all $w \in W_K$.   To compare this quasicharacter with those in \S 2, note the following:
  \begin{align*}
  \omega_s(w) & = ||w||_K^s\\
  & = q_K^{- s \cdot \val_K(\Art_K(w))}\\
  & = z^{d(w)}  
  \end{align*}
  with $z = q_K^{-s} \in \C^\times$.

  If $V$ is a $1$-dimensional continuous complex representation of $W_K$, and $\chi : W_K \to \C^\times$ is the corresponding quasicharacter, then 
 $\varepsilon_K(\chi, \psi)$
   is the abelian local constant of Tate, see \cite[(3.6.3)]{Tate}.  
   
    We note that $\varepsilon_K(V,\psi)$ is denoted $\varepsilon_K^{\Langlands}(V,\psi)$ in \cite{Ikeda} and $\varepsilon_L(V,\psi)$
in \cite[3.6]{Tate}.

We recall that, if $(V,N)$ is any $\Phi$-semisimple Weil-Deligne representation, then we have a finite direct sum decomposition of $(V,N)$ into indecomposable 
Weil-Deligne representations as follows:
\begin{align}\label{D}
(V,N) = V_1 \otimes \Sp(d_1) \oplus \cdots \oplus V_m \otimes \Sp(d_m)
\end{align}

We  will write 
\[
V_j = V_j^{\Gal} \otimes \omega_{s_j}.
\]

\begin{lemma}\label{artin} Let $a(V)$ denote the Artin conductor exponent of $V$.   Then we have $a(V \otimes \omega_s) = a(V)$.   
\end{lemma}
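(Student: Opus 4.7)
The plan is to prove this directly from the definition of the Artin conductor in terms of the ramification filtration. Recall that if $G_i$ (for $i \geq 0$, in either upper or lower numbering) denotes the higher ramification subgroups of $W_K$, then the Artin conductor exponent can be written as
\[
a(V) = \sum_{i \geq 0} \frac{1}{[G_0 : G_i]} \dim\bigl(V / V^{G_i}\bigr),
\]
or equivalently as $a(V) = (\dim V - \dim V^{I}) + \Sw(V)$, where $\Sw(V)$ is the Swan conductor and $I = G_0$ is the inertia group.

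The key observation is that $\omega_s$ is an unramified quasicharacter, so by definition $\omega_s$ is trivial on $I$, and in particular trivial on every higher ramification subgroup $G_i$ with $i \geq 0$. Therefore, for each $i \geq 0$, the restriction $(V \otimes \omega_s)|_{G_i}$ is canonically isomorphic to $V|_{G_i}$ (as a representation of $G_i$), since the twist acts as the identity on these subgroups. Consequently,
\[
\dim\bigl((V \otimes \omega_s)^{G_i}\bigr) = \dim\bigl(V^{G_i}\bigr) \qquad \text{for all } i \geq 0.
\]

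Substituting these equalities into the formula for the Artin conductor exponent above yields $a(V \otimes \omega_s) = a(V)$ immediately. There is no real obstacle here: the content is the elementary fact that the Artin conductor is a function of the restriction of $V$ to the inertia group (together with its ramification filtration), and tensoring by an unramified character does not change that restriction up to isomorphism.
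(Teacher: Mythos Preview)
Your proof is correct and follows essentially the same argument as the paper: both invoke the explicit formula for the Artin conductor in terms of the ramification filtration, observe that $\omega_s$ is trivial on the inertia group (hence on every $G_i$), and conclude that the fixed subspaces $V^{G_i}$ are unchanged under the twist. The only cosmetic difference is that the paper spells out why $\omega_s(I)=1$ via the Artin reciprocity map, whereas you take it as the definition of unramified.
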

\begin{proof} The definition is
\begin{align}\label{cond}
a(V) = \dim V - \dim V^I + \sum_{k \geq 1} \frac{1}{[I:I_k]} \cdot \dim V/ V^{I_k}
\end{align}
where $I = I_0 \supset I_1 \supset \ldots \supset I_k \supset \ldots$ are the ramification subgroups of the inertia group $I$.  
 
We have 
\[\dim  (V \otimes \omega_s) = \dim  V
\]
Now $\omega_s$ is an unramified quasi-character of $W_K$:
\[
\omega_s(I) = ||\Art_K(I)||^s = ||U_K||^s = 1
\]
 and so
\[
(V \otimes \omega_s)^{I_k} = V^{I_k}
\]
for all $k \geq 0$.       The result now follows from (\ref{cond}).   
\end{proof}

In particular, we have
\[
a(V_j) =  a(V_j^{\Gal}).
\]

We need the following three items in order to compute epsilon factors.  

\subsection{Additivity}  Additivity with respect to $V$, see \cite[3.4.2]{Tate}, \cite[Theorem A (ii)]{L}:
\begin{align}\label{add}
\varepsilon_K(V_1 \oplus \cdots \oplus V_k,\psi) = \varepsilon_K(V_1,\psi) \ldots \varepsilon_K(V_k,\psi)
\end{align}

\subsection{Unramified twist}  Behaviour under unramified twist, see \cite[3.4.5]{Tate}, \cite[Lemma 22.4]{L}:
\begin{align}\label{twist}
\varepsilon_K(V \otimes \omega_s,\psi) = \varepsilon_K(V,\psi) q^{-s[a(V) + n(\psi) \dim V]}
\end{align}
where $a(V)$ is the Artin conductor exponent of $V$, and $n(\psi)$ is the conductor of $\psi$.   

\subsection{The extension formula}  The extension to Weil-Deligne representations is as follows \cite[4.1.6]{Tate}:
\begin{align}\label{ext}
\varepsilon_K((V,N), \psi): = \varepsilon_K(V,\psi) \det( - \Phi |V^I / V^I_N)
\end{align}

\subsection{The term $\varepsilon_K(V,\psi)$}  A typical direct summand of (\ref{D}) as a representation of $W_K$ is
\[
V_j^{\Gal} \otimes \omega_{s_j} \otimes \omega_k
\]
with $1 \leq j \leq m, \; 0 \leq k \leq d_j - 1$.  We have
\[
V_j^{\Gal} \otimes \omega_{s_j} \otimes \omega_k   = V_j^{\Gal} \otimes \omega_{s_j + k} 
\]
 For this summand, we have  by (\ref{twist})
\begin{align}\label{typical}
\varepsilon_K(V_j^{\Gal} \otimes \omega_{s_j} \otimes \omega_k,\psi)   =  \varepsilon_K(V_j^{\Gal},\psi) q^{-(s_j + k)[a(V_j^{\Gal}) + n(\psi) \dim V_j^{\Gal}]}
\end{align}
and then the formula for $\varepsilon_K(V,\psi)$ follows from (\ref{add}).  Applying Lemma \ref{artin}, we obtain
\begin{align}\label{1}
\varepsilon_K(V,\psi) = \prod_{j=1}^m\left(\varepsilon_K(V_j^{\Gal},\psi)\right)^{d_j} \cdot q^{-[s_j d_j  + \frac{(d_j-1)d_j}{2}][a(V_j) + n(\psi)\dim(V_j)]}
\end{align}

Note that Ikeda succeeds in describing the numbers $\varepsilon_K(V_j^{\Gal},\psi)$ in terms of the non-abelian local class field theory of $K$,  see 
\cite[Theorem 5.4]{Ikeda}.

\subsection{The determinant}  The determinant is additive 
\[
\det(A \oplus B) = (\det A)(\det B)
\]
 and so it suffices to consider a typical factor in (\ref{ext}), namely
\begin{align} \label{3}
\det(-\Phi | E_j^I / (E_j^I)_{N_j})
 \end{align}
 where $E_j$ is the $W_K$-module given by
 \begin{align*}\label{E}
 E_j & = V_j \otimes  (\omega_0 \oplus \omega_1 \oplus \cdots \oplus  \omega_{d_j -1})\\
 & =  (V_j ^{\Gal} \otimes \omega_{s_j})\otimes  (\omega_0 \oplus \omega_1 \oplus \cdots \oplus  \omega_{d_j -1})\\
& = V_j ^{\Gal} \otimes (\omega_{s_j} \oplus  \omega_{s_j + 1} \oplus \cdots \oplus  \omega_{s_j +d_j -1})
 \end{align*}

 We note that $V_j ^I  = (V_j^{\Gal})^I$.  Then the $W_K$-submodule fixed by the inertia group $I$ is
 \[
 E_j^I :   = V_j^I \otimes (\omega_{s_j} \oplus \omega_{ s_j +1}\oplus \cdots \oplus \omega_{s_j + d_j -1})
 \]

  The $W_K$-submodule of $E_j$ annihilated by $N_j$ is 
 \[
 (E_j)_{N_j}  =  V_j^{\Gal} \otimes \omega_{s_j + d_j-1}
 \]
 from which it follows that
  \[
 (E_j)^I_{N_j}  =  V_j^I \otimes \omega_{s_j + d_j-1}
 \]
 For the quotient we have the following $W_K$-module:
 \begin{align}\label{E}
 E_j^I / (E_j)^I_{N_j} \simeq V_j^I \otimes (\omega_{s_j} \oplus \cdots \oplus \omega_{s_j + d_j-2})
 \end{align}

     Recall that
\[
\omega_s(\Phi) = ||\varpi_K||^s = q_K^{-s}
\]
 
 It is enough to compute the action of $- \Phi$ on the $W_K$-module $V_j^I \otimes \omega_{s_j} \otimes \omega_k$ with $0 \leq k \leq d_j -1$.  
On this $W_K$-module, $- \Phi$ will act as 
\[
q^{ - (s_j + k)} (- \Phi | V_j^I )
\]
and the determinant will be 
\[
q^{ - (s_j + k) \dim V_j^I} \det (- \Phi | V_j^I)
\]

There are $d_j -1$ direct summands in (\ref{E}) so the resulting determinant will be the product
\begin{align}\label{det}
& \prod_{k = 0}^{d_j -2}q^{- (s_j +k) \dim V_j^I} \cdot \det( - \rho_j(\Phi)|V_j^I) \\
& =  \det( - \rho_j(\Phi) | V_j^I )^{d_j - 1} \cdot q^{-s_j (d_j-1)\dim V_j^I }\cdot q^{ - (1 + 2 + \ldots + d_j-2) \dim V_j^I}\\
& =  \det( - \rho_j(\Phi) | V_j^I )^{d_j-1}  \cdot q^{-s_j (d_j-1)\dim V_j^I }\cdot q^{ - \frac{1}{2}(d_j-2)(d_j-1)\dim V_j^I}
\end{align}
provided that $d_j \geq 3$.   By inspection, this formula is also valid for $d_j = 1$ or $2$.

\subsection{The term $\varepsilon_K((V,N)),\psi)$}  We recall the discussion in \S 2 of the quotient torus (\ref{prod}), especially the definition  (\ref{zzz}) of the torus coordinates $z_1, \ldots, z_m$:  
\begin{align*}
z_j & = \omega_{s_j}(\Phi_K) \cdot \mu_{\tau_j}\\
& = q_K^{- s_j} \cdot \mu_{\tau_j}
\end{align*}
where $\tau_j$ is the torsion number of $V_j^{\Gal}$ and $\mu_{\tau_j} \subset \C$ is the cyclic group of order $\tau_j$.   

 From the extension formula (\ref{ext}) we  infer that  $\varepsilon((V,N,\psi)$ is the product of (\ref{1}) and (\ref{det}).  
This product is of the form
\begin{align}\label{exp}
const \cdot z_1^{\beta_1} \cdots z_m^{\beta_m}
\end{align}
where   
\[
\beta_j = (d_j-1) \dim V_j^I  + d_j[a(V_j) + n(\psi)\dim(V_j) ]
\] 
for all $1 \leq j \leq m$. Note that $\beta_j$ is an \emph{integer}:
\[
\beta_j \in \Z.
\]

The constant can be read off from (\ref{1}) and (\ref{det}).   
   Apart from the constant term, the formula (\ref{exp}) for the epsilon factor is a \emph{rational character} of the 
quotient torus (a complex torus of dimension $m$): 
\[
(z_1, \ldots, z_m) \mapsto z_1^{\beta_1} \cdots z_m^{\beta_m}
\]  

Consider the following set:
\begin{align}\label{WD}
 \{\omega_{s_1} \otimes V^{\Gal}_1 \otimes \Sp(d_1) \oplus \cdots \oplus \omega_{s_m} \otimes V^{\Gal}_m \otimes \Sp(d_m) : s_1, \ldots, s_m \in \C\}.
\end{align}
After allowing for conjugacy in the Langlands dual group $\GL_n(\C)$, this set has the structure of a complex algebraic variety $\fX$ in $\cG_n$.  In fact $\fX$ is a connected
component in $\cG_n(K)$:
\[
\fX \subset \cG_n(K)
\]
Applying the local Langlands correspondence, we have, by transport of structure, a connected component in the smooth dual:
\[
\rec_K^{-1} (\fX) \subset \Irr(\GL_n(K))
\]

We emphasize that this imposes a topology on $\Irr(\GL_n(K))$ which is finer than the standard (Zariski) topology in representation theory.   Our topology on the smooth dual 
is well-adapted to the study of the Hecke algebra $\mathcal{H}(\GL_n(K))$ (see Theorem 3 in \cite{BP}), and is, indeed, well-adapted to the study of epsilon factors, 
the subject of this article.

Looking carefully at the formulas (\ref{1}) and (\ref{det}), we see that the constant in (\ref{exp}) depends on the variety $\fX$, the choice of base point of $\fX$, 
and the additive character $\psi$.  We will
denote this constant by $e(\fX, \psi)$, so that (\ref{exp}) can be re-written
\begin{align}\label{exp2}
e(\fX,\psi) \cdot z_1^{\beta_1} \cdots z_m^{\beta_m}
\end{align}
which, up to the constant $e(\fX,\psi)$, factors as a rational character through $\fT$, in the notation of (\ref{torus}).  
   The constant
 $e(\fX,\psi)$ is itself the product of
  \begin{align}\label{e}   
  \prod_{j=1}^m\left(\varepsilon_K(V_j^{\Gal},\psi)\right)^{d_j} \cdot q^{- \frac{1}{2}(d_j-1)d_j [a(V_j) + n(\psi)\dim(V_j)]}
 \end{align}
 with 
 \begin{align}\label{ee}
 \prod_{j = 1}^m \det( - \rho_j(\Phi) | V_j^I )^{d_j-1}  \cdot q^{ - \frac{1}{2}(d_j-2)(d_j-1)\dim V_j^I}
 \end{align}
This establishes our main result, Theorem \ref{main}, which could perhaps be useful in the geometric Langlands program. 
  
The terms $\varepsilon_K(V_j^{\Gal}, \psi)$ are the epsilon factors attached to irreducible representations of the local absolute Galois group $G_K$.   
These terms are defined in \cite[p.15]{Ikeda}.   There is one case where they are readily computed.

\begin{lemma}\label{111}  Let $\psi$ be an additive character $K \to \C^\times$.  Then we have $\varepsilon_K(1,\psi) = 1$.
\end{lemma}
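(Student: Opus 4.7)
The plan is to reduce the claim to Tate's explicit computation of the local constant for the trivial character, together with the two-variable normalization adopted at the start of \S3.

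Recall that the two-variable form is defined by $\varepsilon_K(V,\psi) = \varepsilon_K(s,V,\psi)|_{s = 1/2}$, via the substitution $\varepsilon_K(s,V,\psi) = \varepsilon_K(V \otimes \omega_{s - 1/2}, \psi)$. So the first step is to compute Tate's three-variable constant at $s = 1/2$ for the trivial representation $V = 1$. For this I would appeal to Tate's local functional equation \cite[\S 3.6]{Tate}: we have the local $L$-factors $L(s,1) = (1 - q^{-s})^{-1}$ and $L(1-s,1) = (1 - q^{s-1})^{-1}$, and testing the functional equation against the characteristic function of $\varpi^{-n(\psi)}\integers$, equipped with the self-dual Haar measure determined by $\psi$, makes both local zeta integrals explicit geometric sums. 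Their ratio computes $\varepsilon_K(s,1,\psi) = q^{n(\psi)(1/2 - s)}$, so evaluation at $s = 1/2$ gives $\varepsilon_K(1,\psi) = 1$.

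An alternative route is to cite the normalization property listed among the axioms characterizing the Langlands/Deligne local constants, where $\varepsilon_K(1,\psi) = 1$ is one of the standard normalization conditions under the convention used by Langlands \cite{L}. Either approach suffices. The only genuine obstacle is bookkeeping about conventions: the Langlands normalization $\varepsilon^{\Langlands}$ of \cite[3.6]{Tate}, combined with the $s - 1/2$ shift absorbed into the two-variable reformulation of \S3, is precisely the convention in which the trivial representation has epsilon factor equal to $1$ (rather than, say, $q^{n(\psi)/2}$ as in some other normalizations). Once the conventions are fixed, the calculation is essentially immediate.
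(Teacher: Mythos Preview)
Your proposal is correct, but it takes a different route from the paper. The paper works directly with Tate's explicit Gauss-sum formula \cite[3.6.3]{Tate},
\[
\varepsilon_K(\chi,\psi) = \chi(c)\,\frac{\int_{\mathcal{O}^\times}\chi^{-1}(u)\psi(u/c)\,du}{\bigl|\int_{\mathcal{O}^\times}\chi^{-1}(u)\psi(u/c)\,du\bigr|},
\]
specializes to $\chi = 1$, takes $c = \varpi^{n(\psi)}$, and observes that $u/c \in \varpi^{-n(\psi)}\mathcal{O}^\times$ lies in the conductor of $\psi$, so $\psi(u/c) \equiv 1$ and the integral collapses to $\vol(\mathcal{O}^\times)$; the ratio is then visibly $1$. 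Your first route instead computes the full three-variable factor $\varepsilon_K(s,1,\psi) = q^{n(\psi)(1/2 - s)}$ via the local functional equation and an explicit test function, then evaluates at $s = 1/2$; your second route simply invokes the Langlands normalization axiom. All three arguments are valid. The paper's is the most self-contained and elementary, since it uses only the closed-form integral expression already quoted in \S3; your functional-equation route is more work but recovers the $s$-dependence as a by-product, and your axiom route is shortest but least informative. The only caution is that your zeta-integral sketch leaves the actual computation implicit --- if you were to write it out, you would need to be careful that the self-dual measure and the Langlands (rather than Deligne) normalization are in force, as you correctly flag at the end.
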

\begin{proof} We start with the classical formula in \cite[3.6.3]{Tate}:
\[
\varepsilon_K(\chi, \psi) =  \chi(c) \frac{\int_{\mathcal{O}^{\times}} \chi^{-1}(u) \psi(u/c) \,du}{| \int_{\mathcal{O}^{\times}} \chi^{-1}(u) \psi(u/c) \,du|}
\]
where $c$ is an element of $K^\times$ of valuation $a(\chi) + n(\psi)$.  Now set $\chi = 1$ and let $n(\psi) = k$.   Then we take $c = \varpi^k$.   Then 
$u \in \mathcal{O}^\times \implies u/c \in \varpi^{-k} \mathcal{O}^\times$. But we have
$\psi(\varpi^{-k}\mathcal{O}) = 1$ since $\psi$ has conductor $k$.   Therefore we have
\begin{align*}
\varepsilon_K(1, \psi)  &=   \frac{\int_{\mathcal{O}^{\times}}  \psi(u/c) \,du}{| \int_{\mathcal{O}^{\times}} \psi(u/c) \,du|}\\
&  = \frac{\vol(\mathcal{O}^\times)}{ |\vol(\mathcal{O}^\times) |}\\
&   = 1
\end{align*}

\end{proof}

\section{The arithmetically unramified representations of $\GL_n(K)$}    Here, the underlying representation of the Weil group is the trivial
$n$-dimensional representation $\rho : W_K \to \GL_n(\C)$.   So we have $V_j^{\Gal}  = 1, 1 \leq j \leq n$.  

Let $W$ be the Weyl group $\fS_n$. The arithmetically unramified representations of $\GL_n(K)$ have, by definition, the following set of Langlands parameters 
(Weil-Deligne representations):
\begin{align}\label{LL}
\{\omega_{s_1} \otimes \Sp(d_1) \oplus \cdots \oplus \omega_{s_k} \otimes \Sp(d_k) : s_j \in \C\}
\end{align}
where $d_1 + \cdots + d_k = n$.  This set determines a complex algebraic variety $\fX$  in $\cG_n(K)$.

We choose $\psi$ to have conductor $0$, and now apply Lemma \ref{111}.  In this case  $\beta_j = d_j - 1$.   We have
\begin{align*}\label{Phi1}
\varepsilon_K((V,N,\psi) & = e(\fX,\psi) \prod_{j=1}^m q^{-(d_j-1)s_j} \\
& = e(\fX,\psi) \prod_{j=1}^m z_j^{d_j-1}
\end{align*}
where
\begin{align*}
e(\fX,\psi) = \prod_{j=1}^m (-1)^{d_j-1} q^{-(d_j-1)(d_j-2)/2}
\end{align*}
and $z_j: = q^{-s_j}$.

The epsilon factor records the dimensions $d_j$ of the special representations $\Sp(d_j)$ which occur in 
the Weil-Deligne representation $(V,N)$.

We will now re-organise the partition $d_1 + \cdots + d_k = n$.   
Suppose that this partition has distinct parts $t_1, \ldots , t_m$ with $t_1 <  t_2 < \cdots < t_m$ and that $t_j$ is repeated $r_j$ times so that 
\begin{equation}\label{part}
r_1t_1 + \cdots r_mt_m = n.
\end{equation}
Then, as a function on the complex torus $(\C^\times)^{r_1 + \cdots + r_m}$,  the epsilon factor is \emph{invariant under the following product of symmetric groups}:
\[
\fS_{r_1} \times \fS_{r_2} \times \cdots \times \fS_{r_m}
\]
and therefore factors through the following quotient variety
\[
(\C^\times)^{r_1} / \fS_{r_1} \times \cdots \times (\C^\times)^{r_m} / \fS_{r_m}
\]

 Let  $T$ denote the standard  maximal torus in the Langlands dual group $\GL_n(\C)$, and let $W$ be the Weyl group of $\GL_n(\C)$.
 We have the \emph{inertia space}
 \[
 \mathcal{I}_W(T): = \{(w,t): w \in W, t \in T,  wt = t\}
 \]
 Then $\mathcal{I}_W(T)$ admits an action of $W$, namely $ \alpha \cdot (w,t) = (\alpha w \alpha ^{-1}, \alpha \cdot t)$.   We define $T\q W$ to be the quotient: 
 \[
 T\q W: = \mathcal{I}_W(T)/W
 \]
 This is the \emph{extended quotient} of $T$ by $W$, sometimes called the \emph{inertia stack}, see \cite[\S 1.1]{JM}.

Let $T^w$ denote the $w$-fixed set, and let $Z(w)$ be the $W$-centralizer of $w$.   Choose one $w$ in each $W$-conjugacy class, then we have
\[
T \q W = \bigsqcup T^w / Z(w).
\]

 The conjugacy classes in the symmetric group $W$ are in bijection with the partitions of $n$.   The partition in (\ref{part}) determines the 
 permutation comprising $r_1$ cycles of length $t_1$, \ldots, $r_m$ cycles of length $t_m$.   If this permutation is denoted $w$, then we have
 \[
 T^w  = (\C^\times)^{r_1}  \times \cdots \times (\C^\times)^{r_m} 
\]

The centralizer $Z(w)$ is a product of wreath products 
\[
 \Z/t_1\Z \wr \fS_{r_1} \times \cdots \times \Z/t_m\Z \wr \fS_{r_m}
\]
but the cyclic groups act trivially and so we have
\begin{align*}
 T^w/Z(w) & = (\C^\times)^{r_1} / \fS_{r_1} \times \cdots \times (\C^\times)^{r_m} / \fS_{r_m}
\end{align*}

Every irreducible component in $T\q W$ is accounted for in this way.
The epsilon factors have precisely the amount of symmetry required  to factor through these quotient varieties.

\textsc{Example}.  Here, we consider the following Weil-Deligne representation of $\GL_{19}(K)$:
\[
\omega_{s_1}\Sp(2) \oplus \omega_{s_2} \Sp(2) \oplus \omega_{s_3} \Sp(2) \oplus \omega_{s_4} \Sp(3) \oplus \omega_{s_5} \Sp(3) 
\oplus \omega_{s_6} \Sp(7)
\]
The epsilon factor of this representation is 
\[
const \cdot z_1z_2z_3z_4^2z_5^2z_6^6
\]
which will factor through the following irreducible component of the extended quotient $T \q W$:
\[
\Sym^3(\C^\times) \times \Sym^2 (\C^\times) \times \C^\times
\]

This perfectly illustrates the symmetry properties of the epsilon factors.   Each epsilon factor has precisely the symmetry, \emph{neither more nor less}, of the corresponding 
irreducible component in the extended quotient $T \q W$.   Each epsilon factor will therefore factor through the corresponding irreducible component in $T \q W$.   

\section{The general case}   Let $G $ be a  reductive $p$-adic group and let $^L G$ be the $L$-group 
\[ 
{^L G} = G^\vee \rtimes W_K
\]
where $G^\vee$ is the complex dual group of $G$.      Let 
\[
r :  {^L G} \to \GL(V)
\]
be a representation of $^L G$ on the complex vector space $V$, as in \cite[2.6]{Bor}.  Let  $W'_K$ denote the Weil-Deligne group $W_K \ltimes \C$, defined by $w.z = ||w||z$. 
As in \cite[8.2]{Bor}, let $\phi$ be an $L$-parameter for $G$:
\[
\phi : W'_K \to {^L G}
\]
  
Following  \cite[12.1]{Bor}, we have the composite
\[
r \circ \phi : W'_K \to \GL(V)
\]
The representation $r \circ \phi$ determines, and is determined by, a Weil-Deligne representation $(V,N)$. 
So we obtain an epsilon factor $\varepsilon_K((V,N), \psi)$ to which we may apply all the preceding material.

\end{document}